\documentclass[12pt,a4paper]{elsarticle}

\usepackage{amsmath,amssymb,amsthm}
\usepackage{mathtools}
\usepackage{graphicx}
\usepackage{hyperref}
\usepackage{booktabs}
\usepackage{array}
\usepackage{enumerate}
\usepackage{microtype}
\usepackage{xcolor}
\usepackage{hyperref}
\usepackage{academicons} 
\definecolor{orcidlogocol}{HTML}{A6CE39}
\newtheorem{theorem}{Theorem}

\newtheorem{corollary}[theorem]{Corollary}
\theoremstyle{definition}
\newtheorem{remark}[theorem]{Remark}

\newcommand{\R}{\mathbb{R}}
\newcommand{\norm}[1]{\left\|#1\right\|}
\newcommand{\ip}[2]{\langle #1,\,#2 \rangle}
\newcommand{\diag}{\operatorname{diag}}
\newcommand{\tr}{\operatorname{tr}}
\newcommand{\e}{\mathbf{e}}
\newcommand{\bx}{\mathbf{x}}
\newcommand{\ba}{\mathbf{a}}
\newcommand{\bb}{\mathbf{b}}
\newcommand{\bxi}{\boldsymbol{\xi}}
\DeclareMathOperator{\sprad}{\varrho}

\journal{ Bulletin of the Australian Mathematical Society}

\begin{document}

\begin{frontmatter}

\title{Sharp Convergence Rates and Optimal Weights \\ for Cimmino's Reflection Algorithm}

\author[b]{%
    Hemant Sharma\textsuperscript{\href{https://orcid.org/0009-0006-9020-3785}{\includegraphics[scale=0.04]{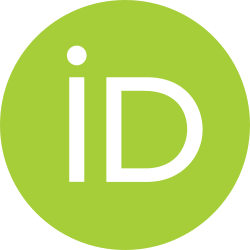}}}\corref{cor1}%
}
\ead{sharmahemant39@gmail.com}

\address[b]{Tungal School of Basic and Applied Sciences, Jamkhandi, Karnataka 587301, India.}
\cortext[cor1]{Corresponding author.}

\begin{abstract}
In this paper, Cimmino's classical reflection algorithm for solving the
$n\times n$ nonsingular linear system $A\bx=\bb$ is analysed through
the lens of spectral theory.
Reformulating the weighted iteration as $\e^{(\nu+1)}=M_w\,\e^{(\nu)}$,
where $M_w = I - A^\top D_w A$, the error is shown to contract by
the spectral radius $\sprad(M_w)$ at every step, with a sharp,
asymptotically tight bound.
For $n=2$, a closed-form expression for the contraction factor is derived,
\[
  \sprad(M_w) \;=\; |1-\mu|
    + \tfrac{1}{2}\sqrt{(w_1-w_2)^2 + 4w_1w_2\cos^2\!\theta},
\]
where $\mu=(w_1+w_2)/2$ and $\theta$ denotes the angle between the
hyperplane normals.
A central result of this paper is that the standard unit weights
$w_1^*=w_2^*=1$ are \emph{globally optimal} over all positive weight
pairs, uniquely achieving the minimum contraction factor
$\sprad^*=|\cos\theta|$ --- a quantity determined solely by the geometry
of the hyperplane normals.
The inter-normal angle $\theta$ thus emerges as the single diagnostic
parameter governing both convergence speed and weight selection.
Extensions to a single-step convergence criterion at $\theta=\pi/2$
and to an exact spectral rate for general~$n$ are also established.
\end{abstract}

\begin{keyword}
Cimmino's method \sep Spectral radius \sep
Convergence rate \sep Optimal weights \sep Hyperplane reflection.\\
\emph{2020 MSC}:~65F10 \sep 65F15 \sep 15A60.
\end{keyword}

\end{frontmatter}

\section{Introduction}

Cimmino's reflection algorithm~\cite{Cimmino1938}, published in 1938
and recently re-examined in~\cite{Guida2023}, solves the nonsingular
linear system
\begin{equation}\label{eq:system}
  A\bx = \bb, \qquad A \in \R^{n\times n},\quad \bb \in \R^n,
  \quad \det A \neq 0,
\end{equation}
by an iterative geometric procedure: starting from any $\bx^{(0)}\in\R^n$,
the algorithm reflects $\bx^{(0)}$ across each of the $n$ affine hyperplanes
$H_i = \{\bx:\ip{\ba_i}{\bx}=b_i\}$ (where $\ba_i^\top$ is the $i$-th
row of~$A$) and sets $\bx^{(1)}$ as the weighted centroid of the
reflected points.
Iterating yields a sequence $(\bx^{(\nu)})$ converging to the unique
solution~$\bxi$.
In matrix form~\cite{Guida2023},
\begin{equation}\label{eq:Cimmino_matrix}
  \bx^{(\nu+1)} = \bx^{(\nu)}
  + A^\top D\bigl(\bb - A\bx^{(\nu)}\bigr),
  \quad D = \diag\!\bigl(\norm{\ba_1}^{-2},\ldots,\norm{\ba_n}^{-2}\bigr).
\end{equation}
The method has been applied extensively in image reconstruction for
radiation therapy~\cite{Censor1988} and benefits from block and
parallel implementations~\cite{Aharoni1989,Torun2018}, making it
attractive for large sparse systems where sequential alternatives
are impractical.
While convergence is guaranteed whenever $A$ is nonsingular,
the analysis in~\cite{Guida2023} provides only the non-expansive bound
$\norm{\bx^{(\nu+1)}-\bxi}\le\norm{\bx^{(\nu)}-\bxi}$,
leaving two fundamental questions open:
\begin{enumerate}[(i)]
\item What is the \emph{exact} contraction rate at each iteration?
\item How should one choose the weights $w_i>0$ assigned to the reflected
      points to minimise that rate?
\end{enumerate}
We answer both questions completely for $n=2$
(Theorems~\ref{thm:rate} and~\ref{thm:optimal})
and provide an exact spectral rate for general~$n$
(Theorem~\ref{thm:general-n}).
The main finding is that \emph{unit weights are globally optimal},
achieving $\sprad^*=|\cos\theta|$, where $\theta$ is the sole geometric
parameter governing convergence.
This yields the first rigorous justification for the standard weight
choice used in practice.

The related Kaczmarz method~\cite{Kaczmarz1937} performs sequential
(not simultaneous) projections and tends to converge faster per step,
but is less suited to parallel implementation~\cite{Benzi2022}.
Randomised~\cite{Strohmer2009} and block~\cite{Needell2014} variants
have received substantial recent attention in the
Kaczmarz setting; the weight-optimality result obtained here has no
counterpart there.

\section{Problem Formulation and Notation}\label{sec:notation}

Let $P_i = \hat{\ba}_i\hat{\ba}_i^\top$ be the rank-one orthogonal
projection onto $\ba_i$, where $\hat{\ba}_i=\ba_i/\norm{\ba_i}$.
The \emph{weighted} Cimmino iteration assigns positive masses $w_i>0$
to each reflected point:
\begin{equation}\label{eq:weighted_iter}
  \bx^{(\nu+1)}
  = \bx^{(\nu)}
    + \sum_{i=1}^{n}
      \frac{w_i\bigl(b_i-\ip{\ba_i}{\bx^{(\nu)}}\bigr)}{\norm{\ba_i}^2}
      \,\ba_i
  = \bx^{(\nu)} + A^\top D_w\bigl(\bb - A\bx^{(\nu)}\bigr),
\end{equation}
where $D_w=\diag(w_1\norm{\ba_1}^{-2},\ldots,w_n\norm{\ba_n}^{-2})$.
Setting $\e^{(\nu)}=\bx^{(\nu)}-\bxi$ and using $A\bxi=\bb$, the
error satisfies the linear recursion
\begin{equation}\label{eq:error_recursion}
  \e^{(\nu+1)} = M_w\,\e^{(\nu)}, \qquad
  M_w := I - A^\top D_w A = I - \textstyle\sum_{i=1}^{n} w_i P_i.
\end{equation}
For $n=2$ we define the \emph{inter-normal angle}
$\theta\in(0,\pi)$ by $\cos\theta=\ip{\hat{\ba}_1}{\hat{\ba}_2}$.

Figure~\ref{fig:reflection} visualises the recursion~\eqref{eq:error_recursion}
geometrically for $n=2$.
The key observation, which underpins every theorem that follows, is that
\emph{all reflected points $Q^{(i)}$ lie on the sphere of radius
$\norm{\e^{(0)}}$ centred at the solution $Z$}; the weighted centroid
$P^{(1)}$ therefore lies strictly inside that sphere.
The angle $\theta$ labelled in the figure is the single quantity that
controls how fast the centroid moves toward $Z$: a wider opening between
the two hyperplanes (smaller $|\cos\theta|$) corresponds to faster
convergence, and at $\theta=\pi/2$ the centroid lands exactly at $Z$ in
one step (Corollary~\ref{cor:one_step}).

\begin{figure}[ht]
  \centering
  \includegraphics[width=0.72\textwidth]{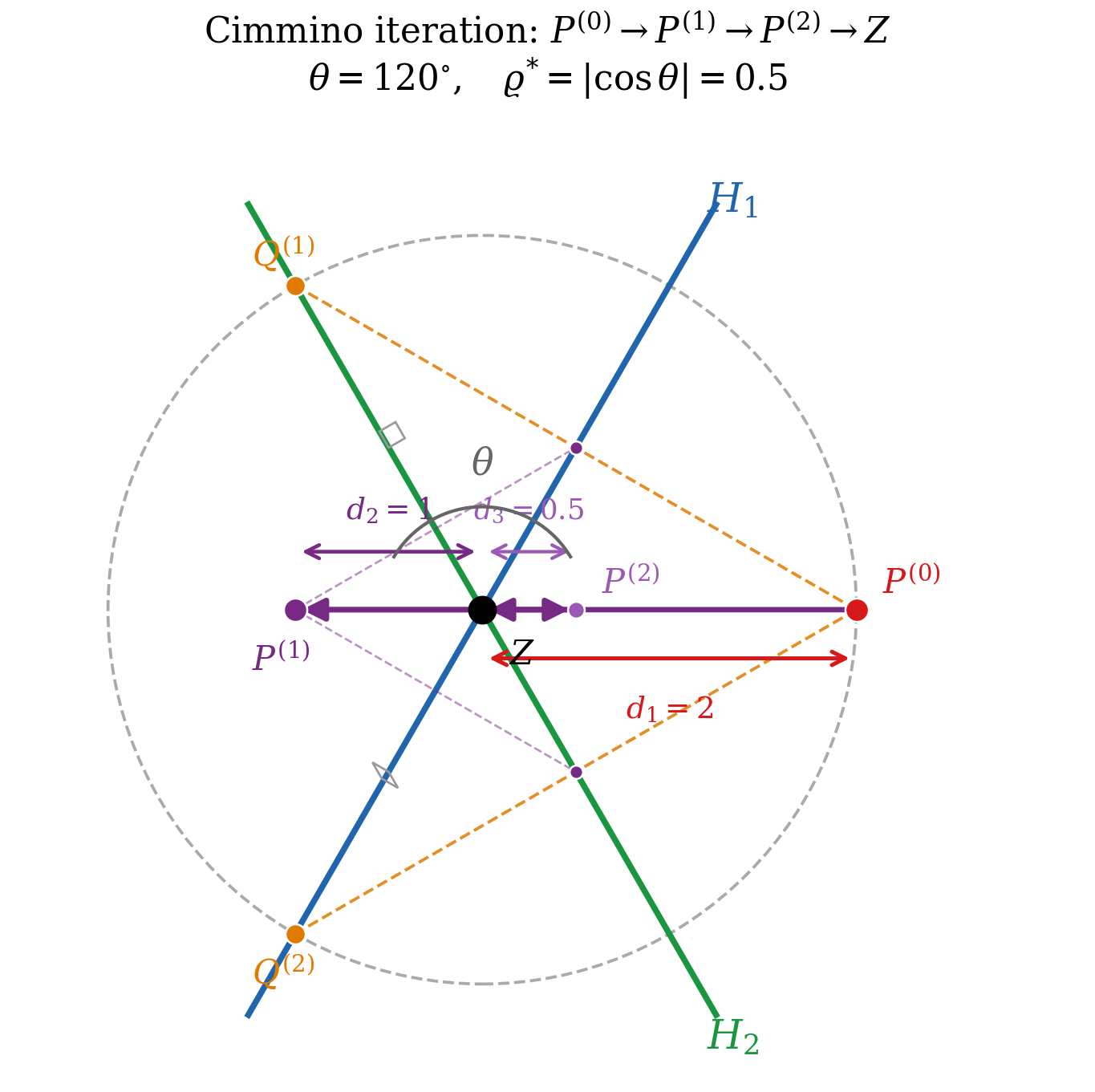}
  \caption{%
    \textbf{Geometric foundation (Section~\ref{sec:notation}).}
    Cimmino's method for $n=2$, with solution $Z$ at the origin and
    $P^{(0)}=(2,0)$.
    The dashed circle has radius $d_1=\norm{P^{(0)}-Z}=2$.
    Reflections $Q^{(1)},Q^{(2)}$ of $P^{(0)}$ across $H_1$ (blue)
    and $H_2$ (green) lie exactly on this circle --- a classical
    result~\cite[Lemma~2.2]{Guida2023}.
    The unit-weight centroid $P^{(1)}$ falls strictly inside, giving
    $d_2=1$; a second iteration from $P^{(1)}$ yields $P^{(2)}$ with
    $d_3=0.5$.
    Each iteration halves the error because
    $\varrho^*=|\cos\theta|=0.5$ at $\theta=120^{\circ}$.
  }
  \label{fig:reflection}
\end{figure}

The inter-normal angle $\theta$ is the sole quantity determining the
convergence rate (Theorem~\ref{thm:optimal}); no other information
about $A$ or $\bb$ is needed.

\section{Main Results}\label{sec:main}


\begin{theorem}[Spectral Convergence Criterion]\label{thm:convergence}
The weighted Cimmino iteration~\eqref{eq:weighted_iter} converges for
every $\bx^{(0)}\in\R^n$ if and only if $\sprad(M_w)<1$, equivalently,
all eigenvalues of $A^\top D_w A$ lie in $(0,2)$.
Moreover, for every $\nu\ge 0$,
\begin{equation}\label{eq:error_bound}
  \norm{\e^{(\nu)}}_2 \;\le\; \sprad(M_w)^{\nu}\,\norm{\e^{(0)}}_2,
\end{equation}
and the bound is asymptotically sharp.
\end{theorem}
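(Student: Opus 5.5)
The key structural fact is that $M_w = I - A^\top D_w A$ is symmetric, because $D_w$ is diagonal. Equivalently, $M_w = I - \sum_i w_i P_i$ is a real combination of orthogonal projections. By the spectral theorem, $M_w = Q\Lambda Q^\top$ with $Q$ orthogonal and $\Lambda$ real diagonal. Hence $\norm{M_w}_2 = \sprad(M_w)$ and, more generally, $\norm{M_w^\nu}_2 = \sprad(M_w)^\nu$ for every $\nu$. I would establish this symmetry first, since every part of the theorem rests on it.

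The error bound then follows by iterating the recursion~\eqref{eq:error_recursion}. From $\e^{(\nu)} = M_w^\nu \e^{(0)}$ we get $\norm{\e^{(\nu)}}_2 \le \norm{M_w^\nu}_2\norm{\e^{(0)}}_2 = \sprad(M_w)^\nu\norm{\e^{(0)}}_2$.

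Sharpness comes from choosing $\e^{(0)}$ to be a unit eigenvector $\mathbf{v}$ of $M_w$ whose eigenvalue $\lambda$ satisfies $|\lambda| = \sprad(M_w)$. Then $\norm{\e^{(\nu)}}_2 = \sprad(M_w)^\nu$ exactly for all $\nu$. So the bound is attained, not merely asymptotically approached. For a generic start, expand $\e^{(0)}$ in the eigenbasis. Whenever the component on the dominant eigenspace is nonzero, $\norm{\e^{(\nu)}}^{1/\nu}\to\sprad(M_w)$.

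For the convergence criterion, sufficiency is immediate from the bound. For necessity, suppose $\sprad(M_w)\ge 1$ and take $\e^{(0)}=\mathbf{v}$ as above, i.e.\ $\bx^{(0)} = \bxi + \mathbf{v}$. Then $\norm{\e^{(\nu)}}=\sprad(M_w)^\nu\ge 1$ for all $\nu$, so the iteration does not converge.

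For the equivalent eigenvalue formulation, note that $A^\top D_w A$ is symmetric positive definite, since $A$ is nonsingular and $D_w$ has positive diagonal entries. Its eigenvalues $\lambda_j$ are therefore positive, and $M_w$ has eigenvalues $1-\lambda_j$. The condition $|1-\lambda_j|<1$ is equivalent to $\lambda_j\in(0,2)$.

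The only real obstacle is remembering that symmetry is what turns the usual asymptotic statement ($\norm{M^\nu}^{1/\nu}\to\sprad$) into the clean non-asymptotic inequality~\eqref{eq:error_bound}. Once this is noted, the rest is routine.
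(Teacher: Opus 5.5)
Your proposal is correct and follows essentially the same route as the paper's proof. Symmetry of $M_w$ gives $\|M_w^\nu\|_2=\varrho(M_w)^\nu$ and hence the bound, positive definiteness of $A^\top D_w A$ yields the $(0,2)$ criterion, and a dominant eigenvector of $M_w$ gives sharpness; your explicit necessity argument, starting from $\bx^{(0)}=\bxi+\mathbf{v}$ when $\varrho(M_w)\ge 1$, spells out a step the paper only asserts.
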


\begin{proof}
From~\eqref{eq:error_recursion}, $\e^{(\nu)}=M_w^\nu\e^{(0)}$.
Since $B_w:=A^\top D_w A=\sum_i w_i P_i$ is symmetric positive definite
($A$ nonsingular, $w_i>0$), its eigenvalues
$0<\lambda_1\le\cdots\le\lambda_n$ are real and positive.
The eigenvalues of $M_w=I-B_w$ are $\mu_i=1-\lambda_i$, and
$\norm{M_w^\nu}_2=\sprad(M_w)^\nu$ for symmetric matrices,
giving the bound.
Convergence requires $|\mu_i|<1$ for all $i$, i.e.\
$\lambda_i\in(0,2)$.
Sharpness follows by choosing $\e^{(0)}$ in the eigenspace of the
dominant eigenvalue.
\end{proof}

Theorem~\ref{thm:convergence} replaces the bare inequality of~\cite{Guida2023}
with a quantitative envelope: the error at step $\nu$ is at most
$\sprad(M_w)^\nu$ times the initial error.
Figure~\ref{fig:envelopes} shows this envelope for three weight
choices at the same angle $\theta=120^{\circ}$.
The gap between the green (optimal) and red (poor) curves --- a factor
of $0.90^{12}/0.50^{12}\approx 180$ in error after twelve iterations
--- demonstrates that a na\"ive weight choice can be dramatically
suboptimal even when all weights are equal.
The figure also confirms that the bound~\eqref{eq:error_bound} is
tight: the optimal curve drops at exactly the rate predicted by
$\varrho^*=0.50$.

\begin{figure}[ht]
  \centering
  \includegraphics[width=0.82\textwidth]{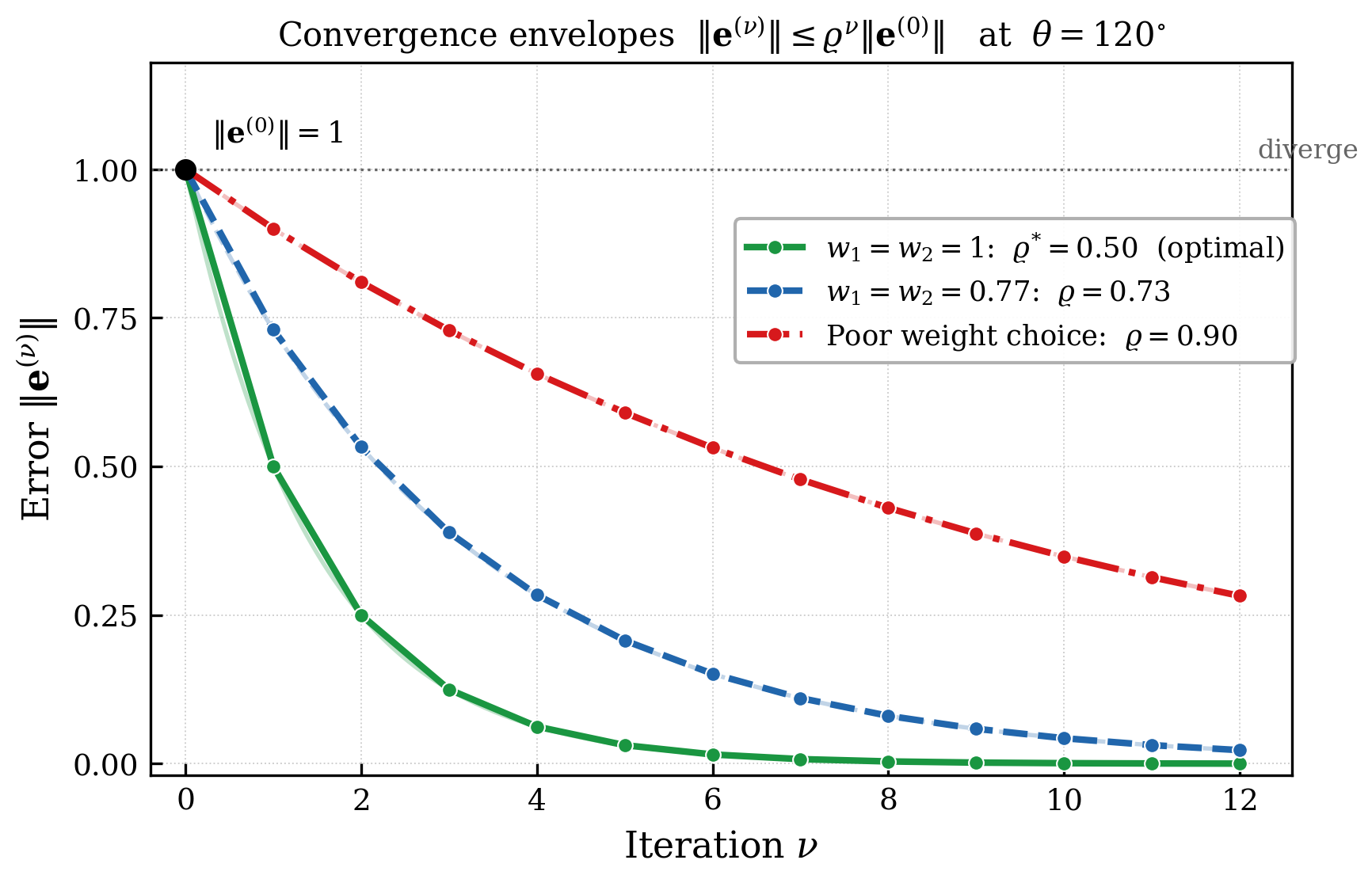}
  \caption{%
    \textbf{Error envelopes --- Theorem~\ref{thm:convergence}.}
    Three weight choices at $\theta=120^{\circ}$,
    all with $\norm{\e^{(0)}}=1$.
    Solid circles mark the actual iterate errors
    $\norm{\e^{(\nu)}}$ computed from~\eqref{eq:error_recursion};
    the dashed background curves are the continuous envelopes
    $\varrho^\nu$.
  }
  \label{fig:envelopes}
\end{figure}

The exact value of $\sprad(M_w)$ for $n=2$ is given next.


\begin{theorem}[Explicit Contraction Factor, $n=2$]\label{thm:rate}
Let $n=2$, $w_1,w_2>0$, $\mu=(w_1+w_2)/2$, and let $\theta$ be the
angle between the rows $\ba_1$ and $\ba_2$ of $A$.  Then
\begin{equation}\label{eq:rho_formula}
  \sprad(M_w) \;=\; |1-\mu|
    \;+\; \tfrac{1}{2}\sqrt{(w_1-w_2)^2 + 4w_1w_2\cos^2\!\theta}.
\end{equation}
\end{theorem}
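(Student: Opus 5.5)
The plan is to compute the two eigenvalues of $B_w = w_1P_1 + w_2P_2$ explicitly and then take the maximum of $|1-\lambda_i|$. By the proof of Theorem~\ref{thm:convergence}, $M_w=I-B_w$ is symmetric, so $\sprad(M_w)=\max_i|1-\lambda_i|$, where $\lambda_1\le\lambda_2$ are the eigenvalues of $B_w$.

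First I compute the trace and determinant of $B_w$. Since $\tr P_i=\norm{\hat\ba_i}^2=1$, we get $\tr B_w=w_1+w_2=2\mu$. For the determinant, I work in the orthonormal basis in which $\hat\ba_1=(1,0)^\top$ and $\hat\ba_2=(\cos\theta,\sin\theta)^\top$; this is legitimate because conjugation by an orthogonal matrix preserves the spectrum. In that basis
\[
B_w=\begin{pmatrix} w_1+w_2\cos^2\theta & w_2\cos\theta\sin\theta\\ w_2\cos\theta\sin\theta & w_2\sin^2\theta\end{pmatrix},
\]
so $\det B_w=w_1w_2\sin^2\theta$. The eigenvalues are therefore $\lambda_{1,2}=\mu\mp\tfrac12 R$, with
\[
R=\sqrt{(w_1+w_2)^2-4w_1w_2\sin^2\theta}=\sqrt{(w_1-w_2)^2+4w_1w_2\cos^2\theta},
\]
where the second form follows from $(w_1+w_2)^2-4w_1w_2=(w_1-w_2)^2$ and $1-\sin^2\theta=\cos^2\theta$. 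The discriminant is visibly nonnegative, as it must be for a symmetric matrix.

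Next I evaluate the spectral radius. We have $1-\lambda_{1,2}=(1-\mu)\pm\tfrac12R$, and the elementary identity $\max(|c+d|,|c-d|)=|c|+|d|$ for real $c,d$ gives
\[
\sprad(M_w)=|1-\mu|+\tfrac12 R,
\]
which is exactly \eqref{eq:rho_formula}.

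No step here is a serious obstacle. The only points needing care are justifying the change to an orthonormal basis, or equivalently computing $\det B_w$ invariantly via $\det(w_1P_1+w_2P_2)=w_1w_2\bigl(1-\ip{\hat\ba_1}{\hat\ba_2}^2\bigr)$, and the final $\max|c\pm d|$ identity.
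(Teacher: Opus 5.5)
Your proposal is correct and follows essentially the same route as the paper: you compute the trace $2\mu$ and determinant $w_1w_2\sin^2\theta$ of $w_1P_1+w_2P_2$, obtain the eigenvalues $\mu\pm\tfrac12\Delta$, and conclude $\sprad(M_w)=|1-\mu|+\Delta/2$. The only cosmetic differences are that you verify the determinant in an adapted orthonormal basis, and you spell out the identity $\max(|c+d|,|c-d|)=|c|+|d|$; the paper instead quotes $\det(\alpha uu^\top+\beta vv^\top)=\alpha\beta(1-\ip{u}{v}^2)$ directly and states the final maximum without comment.
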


\begin{proof}
The $2\times 2$ symmetric matrix $A^\top D_w A=w_1P_1+w_2P_2$
satisfies
\begin{align}
  \tr(w_1P_1+w_2P_2)  &= w_1+w_2 = 2\mu,           \label{eq:trace}\\
  \det(w_1P_1+w_2P_2) &= w_1w_2\sin^2\!\theta,      \label{eq:det}
\end{align}
where~\eqref{eq:det} uses
$\det(\alpha uu^\top+\beta vv^\top)=\alpha\beta(1-\ip{u}{v}^2)$ for
unit vectors.
Its eigenvalues are
$\lambda_\pm=\mu\pm\tfrac{1}{2}\Delta$,
$\Delta=\sqrt{(w_1-w_2)^2+4w_1w_2\cos^2\theta}$.
The eigenvalues of $M_w$ are $(1-\mu)\mp\Delta/2$, giving
$\sprad(M_w)=\max(|(1-\mu)-\Delta/2|,\,|(1-\mu)+\Delta/2|)
=|1-\mu|+\Delta/2$.
\end{proof}


\begin{theorem}[Global Optimality of Unit Weights]\label{thm:optimal}
For $n=2$ and all $w_1,w_2>0$,
\begin{equation}\label{eq:lower_bound}
  \sprad(M_w) \;\ge\; |\cos\theta|,
\end{equation}
with equality if and only if $w_1=w_2=1$.
Consequently $\varrho^*:=\min_{w_1,w_2>0}\sprad(M_w)=|\cos\theta|$,
achieved uniquely at the standard unit weights.
\end{theorem}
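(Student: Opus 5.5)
The plan is to start from the closed form of Theorem~\ref{thm:rate}, namely $\sprad(M_w)=|1-\mu|+\tfrac12\Delta$ with $\Delta=\sqrt{(w_1-w_2)^2+4w_1w_2\cos^2\theta}$. I would bound the two summands separately, each by something depending only on $\mu$, and then minimise a one-variable piecewise-linear function of $\mu$.

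\emph{Step 1 (rewrite $\Delta$).} I will use the algebraic identity
\[
  (w_1-w_2)^2+4w_1w_2\cos^2\theta \;=\; (w_1-w_2)^2\sin^2\theta+(w_1+w_2)^2\cos^2\theta .
\]
It can be checked by expanding the right-hand side: the $w_1^2+w_2^2$ terms match, and the cross terms give $-2w_1w_2\sin^2\theta+2w_1w_2\cos^2\theta=-2w_1w_2+4w_1w_2\cos^2\theta$. Since the first summand is nonnegative,
\[
  \Delta\ge (w_1+w_2)|\cos\theta|=2\mu|\cos\theta| .
\]
Because $\theta\in(0,\pi)$ gives $\sin\theta\neq0$, equality holds if and only if $w_1=w_2$. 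This remains true when $\cos\theta=0$, since the $\sin^2\theta$ term then still forces $w_1=w_2$.

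\emph{Step 2 (reduce to one variable).} Step~1 gives
\[
  \sprad(M_w)\ge f(\mu):=|1-\mu|+\mu|\cos\theta| ,
\]
for $\mu>0$. Put $c=|\cos\theta|$, and note $c<1$ because $\theta\in(0,\pi)$. The function $f$ is piecewise linear. On $(0,1]$ it has slope $c-1<0$, and on $[1,\infty)$ it has slope $1+c>0$. Hence $f$ attains its minimum only at $\mu=1$, where $f(1)=c$. This proves~\eqref{eq:lower_bound}.

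\emph{Step 3 (equality case).} Equality $\sprad(M_w)=|\cos\theta|$ requires equality in both estimates. Equality in Step~1 needs $w_1=w_2$, and equality in Step~2 needs $\mu=1$. Together these give $w_1=w_2=1$. Conversely, at $w_1=w_2=1$ the formula of Theorem~\ref{thm:rate} gives $\sprad=0+\tfrac12\cdot 2|\cos\theta|=|\cos\theta|$. It follows that $\varrho^*=|\cos\theta|$ and that this minimum is attained uniquely at unit weights.

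The only non-routine step is finding the decomposition of $\Delta^2$ in Step~1, which separates the dependence on $\mu$ from the imbalance $w_1-w_2$. Once that identity is in place, the rest is an elementary convexity argument on a piecewise-linear function, and strict monotonicity on each side of $\mu=1$ relies on the strict inequality $c<1$.
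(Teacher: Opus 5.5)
Your proposal is correct and takes the same route as the paper. The paper sets $s=(w_1-w_2)/2$ to obtain $\sprad(M_w)=|1-\mu|+\sqrt{s^2\sin^2\theta+\mu^2\cos^2\theta}\ge|1-\mu|+\mu|\cos\theta|$, which is your identity for $\Delta^2$, and then minimises the same piecewise-linear function of $\mu$, with equality forcing $s=0$ and $\mu=1$. Your explicit use of $|\cos\theta|<1$ for strictness and your converse check at unit weights make the equality case slightly more complete than the paper's write-up.
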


\begin{proof}
Write $\mu=(w_1+w_2)/2$ and $s=(w_1-w_2)/2$, so $w_1w_2=\mu^2-s^2$.
Formula~\eqref{eq:rho_formula} gives
\begin{equation}\label{eq:rho_mus}
  \sprad(M_w)
  = |1-\mu| + \sqrt{s^2\sin^2\!\theta + \mu^2\cos^2\!\theta}
  \ge |1-\mu| + \mu|\cos\theta| =: g(\mu),
\end{equation}
since $s^2\sin^2\theta\ge 0$ and $\mu>0$.
For $0<\mu\le 1$:
$g(\mu)=1-\mu(1-|\cos\theta|)\ge g(1)=|\cos\theta|$.
For $\mu\ge 1$:
$g(\mu)=\mu(1+|\cos\theta|)-1\ge g(1)=|\cos\theta|$.
Hence $\sprad(M_w)\ge|\cos\theta|$ for all $\mu>0$.
Equality in the first inequality requires $s=0$
(i.e.\ $w_1=w_2$, since $\sin\theta>0$ on $(0,\pi)$) and in the
second requires $\mu=1$
(i.e.\ $w_1+w_2=2$); together, $w_1=w_2=1$ is the unique minimiser.
\end{proof}

Figure~\ref{fig:rho_theta} translates Theorem~\ref{thm:optimal} into
a visual statement: for \emph{every} angle $\theta\in(0^{\circ},180^{\circ})$,
the green curve (unit weights, $\varrho^*=|\cos\theta|$) lies at or below
every other curve.
Three further observations are worth noting.
First, all four curves share the same value only at the two endpoints
$\theta\to 0^{\circ}$ and $\theta\to 180^{\circ}$, where all weights
converge to $\varrho=1$ --- meaning convergence becomes arbitrarily
slow as the two hyperplanes become nearly parallel, regardless of
weight choice.
Second, the red curve ($w_1=w_2=1.4$) crosses the divergence boundary
$\varrho=1$ near the endpoints, showing that even equal weights can
cause divergence when they differ from unity.
Third, Corollary~\ref{cor:one_step} is visible as the green curve
touching zero exactly at $\theta=90^{\circ}$: when the hyperplane
normals are orthogonal, unit weights deliver exact convergence in a
single step.

\begin{figure}[ht]
  \centering
  \includegraphics[width=0.84\textwidth]{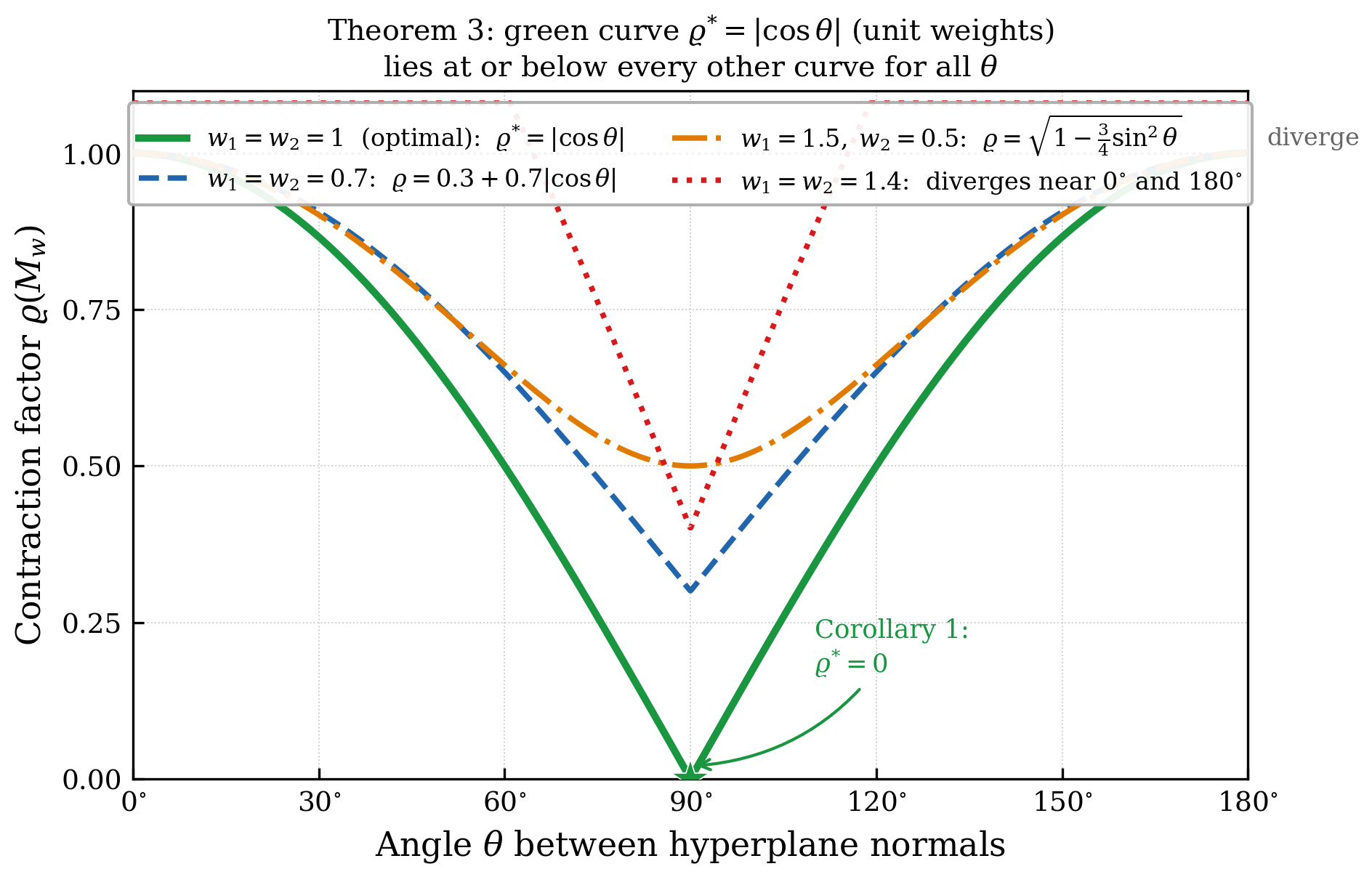}
  \caption{%
    \textbf{Visual proof of global optimality --- Theorem~\ref{thm:optimal}.}
    Contraction factor $\varrho(M_w)$ versus inter-normal angle $\theta$
    for four weight choices (Theorem~\ref{thm:rate}).
    The green solid curve $\varrho^*=|\cos\theta|$ (unit weights) lies
    at or below every other curve for all $\theta\in(0^{\circ},180^{\circ})$,
    and the red dotted curve ($w_1=w_2=1.4$) crosses the divergence
    boundary $\varrho=1$ near the endpoints.
  }
  \label{fig:rho_theta}
\end{figure}

The figure constitutes a graphical proof of
Theorem~\ref{thm:optimal}: the green curve is visibly and
provably the global minimum over all $w_1,w_2>0$ for every
$\theta$.
Crucially, the minimum value $\varrho^*=|\cos\theta|$ depends only
on the geometry of the hyperplane normals --- not on the right-hand
side $\bb$, nor on the magnitudes of the rows of $A$.
This makes $\theta$ a computable, preprocessing-time diagnostic:
if $|\cos\theta|$ is close to $1$ (nearly parallel hyperplanes),
Cimmino's method will converge slowly regardless of the weight choice,
and a different solver or preconditioner should be considered.

\begin{corollary}[Single-Step Convergence]\label{cor:one_step}
If $\theta=\pi/2$, then $\bx^{(1)}=\bxi$ for every
$\bx^{(0)}\in\R^2$.
\end{corollary}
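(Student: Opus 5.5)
The plan is to read the corollary as a statement about the standard (unit-weight) Cimmino iteration~\eqref{eq:Cimmino_matrix}, that is, $w_1=w_2=1$. This reading is forced by Theorem~\ref{thm:optimal}: at $\theta=\pi/2$ we have $|\cos\theta|=0$, and unit weights are the unique pair attaining $\sprad(M_w)=0$, so any other weights give $\sprad(M_w)>0$ and the conclusion would fail. I will state this restriction explicitly at the start of the proof. The goal is then to show that $M_w$ is the zero matrix, because by~\eqref{eq:error_recursion} this gives $\e^{(1)}=M_w\e^{(0)}=\mathbf{0}$, i.e.\ $\bx^{(1)}=\bxi$, for every starting point.

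The quickest route goes through Theorem~\ref{thm:rate}. Put $w_1=w_2=1$, so that $\mu=1$, and $\cos\theta=0$ into~\eqref{eq:rho_formula}. This gives $\sprad(M_w)=0+\tfrac12\sqrt{0+0}=0$.

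A zero spectral radius alone does not make a general matrix vanish. Here, however, $M_w=I-(P_1+P_2)$ is symmetric, so it is orthogonally diagonalisable with all eigenvalues equal to $0$. Hence $M_w=0$. Equivalently, $\norm{M_w}_2=\sprad(M_w)=0$, which is exactly the identity already used in the proof of Theorem~\ref{thm:convergence}.

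As a self-contained cross-check I would also give the direct geometric argument. When $\theta=\pi/2$, the unit normals $\hat{\ba}_1,\hat{\ba}_2$ form an orthonormal basis of $\R^2$. The resolution of the identity then gives $P_1+P_2=\hat{\ba}_1\hat{\ba}_1^\top+\hat{\ba}_2\hat{\ba}_2^\top=I$, so $M_w=I-P_1-P_2=0$ immediately.

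There is no real obstacle here. The only points needing care are making the unit-weight hypothesis explicit, and justifying the step from $\sprad=0$ to $M_w=0$, which uses the symmetry of $M_w$.
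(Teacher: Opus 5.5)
Your proof is correct, and the ``cross-check'' you give is exactly the paper's proof: when $\cos\theta=0$ the unit normals $\hat{\ba}_1,\hat{\ba}_2$ form an orthonormal basis, so $P_1+P_2=I$ and $M=I-I=0$.

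Your primary route is different. You specialise \eqref{eq:rho_formula} to $w_1=w_2=1$ and $\cos\theta=0$ to get $\sprad(M_w)=0$. You then use the symmetry of $M_w$ to upgrade this to $M_w=0$. You are right that this step genuinely needs symmetry, since a nonzero nilpotent matrix has zero spectral radius.

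This works, but it is a detour. It passes through Theorem~\ref{thm:rate}, whose own proof is a trace/determinant computation, when the conclusion follows in one line from $P_1+P_2=I$.

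Nor is the detour needed for your observation that unit weights are forced. With $P_1+P_2=I$ one has $M_w=(1-w_1)P_1+(1-w_2)P_2$, where $P_1,P_2$ are nonzero with $P_1P_2=0$. This vanishes if and only if $w_1=w_2=1$, so the direct argument gives the converse too, without invoking Theorem~\ref{thm:optimal}.

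Stating the unit-weight hypothesis explicitly is still a worthwhile clarification. The paper's statement leaves it implicit, and its proof simply writes $M$ without a subscript.
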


\begin{proof}
When $\cos\theta=0$, $\{\hat{\ba}_1,\hat{\ba}_2\}$ is an orthonormal
basis of $\R^2$, so $P_1+P_2=I$ and $M=I-I=0$.
\end{proof}


\begin{theorem}[Exact rate and optimal scaling, general $n$]\label{thm:general-n}
Let $A\in\R^{n\times n}$ be nonsingular, $w_i>0$, and let
$B_w := A^{\top}D_w A = \sum_{i=1}^{n} w_i P_i$ have eigenvalues
$0<\lambda_1\le\cdots\le\lambda_n$.
Then
\begin{equation}\label{eq:exact-rho}
  \sprad(M_w) = \max\bigl(|1-\lambda_1|,\,|1-\lambda_n|\bigr),
\end{equation}
and the iteration converges if and only if $\lambda_n<2$.
Moreover, within the scaling family $\{\alpha w:\alpha>0\}$ the
optimal rate is
\begin{equation}\label{eq:optimal-scaling}
  \min_{\alpha>0}\sprad(M_{\alpha w})
  = \frac{\kappa(B_w)-1}{\kappa(B_w)+1},
  \qquad \kappa(B_w)=\lambda_n/\lambda_1,
\end{equation}
attained at $\alpha^{\star}=2/(\lambda_1+\lambda_n)$.
\end{theorem}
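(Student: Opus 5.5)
The plan is to reduce everything to a scalar problem on the spectrum of $B_w$, reusing the structure established in the proof of Theorem~\ref{thm:convergence}.

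\textbf{Step 1: the exact rate.} Since $B_w=\sum_i w_iP_i$ is symmetric positive definite, its eigenvalues satisfy $0<\lambda_1\le\cdots\le\lambda_n$. The matrix $M_w=I-B_w$ is symmetric with eigenvalues $1-\lambda_i$, so $\sprad(M_w)=\max_i|1-\lambda_i|$. The function $\lambda\mapsto|1-\lambda|$ is convex, so its maximum over the interval $[\lambda_1,\lambda_n]$, and hence over the finite set $\{\lambda_i\}$, is attained at an endpoint. This gives \eqref{eq:exact-rho}.

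\textbf{Step 2: the convergence criterion.} By Theorem~\ref{thm:convergence}, convergence holds if and only if every $\lambda_i\in(0,2)$. Positivity is automatic, so the condition reduces to $\lambda_n<2$.

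\textbf{Step 3: optimal scaling.} Replacing $w$ by $\alpha w$ replaces $D_w$ by $\alpha D_w$, hence $B_{\alpha w}=\alpha B_w$, whose eigenvalues are $\alpha\lambda_i$. By Step 1 we must minimise
\[
f(\alpha)=\max\bigl(|1-\alpha\lambda_1|,\,|1-\alpha\lambda_n|\bigr), \qquad \alpha>0.
\]
I would split into cases.
\begin{itemize}
\item For $\alpha\le 2/(\lambda_1+\lambda_n)$, one has $1-\alpha\lambda_1\ge\alpha\lambda_n-1$. The maximum is then $1-\alpha\lambda_1$, since this also dominates $|1-\alpha\lambda_n|$ when $\alpha\lambda_n\le1$ because $\lambda_1\le\lambda_n$. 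This branch is decreasing in $\alpha$.
\item For $\alpha\ge 2/(\lambda_1+\lambda_n)$, the maximum is $\alpha\lambda_n-1$, which is increasing in $\alpha$.
\end{itemize}
Thus the minimum sits at the balancing point $\alpha^\star=2/(\lambda_1+\lambda_n)$. There
\[
f(\alpha^\star)=1-\frac{2\lambda_1}{\lambda_1+\lambda_n}=\frac{\lambda_n-\lambda_1}{\lambda_n+\lambda_1}=\frac{\kappa-1}{\kappa+1}.
\]

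\textbf{Expected obstacle.} The only delicate point is the case analysis for $f$: one must verify carefully that on each side of $\alpha^\star$ the correct branch dominates, including the degenerate case $\lambda_1=\lambda_n$, where $f(\alpha^\star)=0$. A clean way to handle this is to note that $f$ is the maximum of convex functions, so it is convex. Moreover, $f(\alpha)\ge\tfrac12\bigl[(1-\alpha\lambda_1)+(\alpha\lambda_n-1)\bigr]\cdot$ — more precisely, taking the convex combination with weights $\lambda_n/(\lambda_1+\lambda_n)$ and $\lambda_1/(\lambda_1+\lambda_n)$ gives
\[
f(\alpha)\ge\frac{\lambda_n(1-\alpha\lambda_1)+\lambda_1(\alpha\lambda_n-1)}{\lambda_1+\lambda_n}=\frac{\lambda_n-\lambda_1}{\lambda_n+\lambda_1}.
\]
This is a lower bound independent of $\alpha$, and it is attained at $\alpha^\star$. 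I would present this weighted-average argument as the main proof, since it avoids the case split entirely.
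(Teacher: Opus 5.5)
Your proof is correct, and Steps~1--2 match the paper's argument: symmetry of $M_w$, convexity of $\lambda\mapsto|1-\lambda|$ to reduce to $\lambda_1,\lambda_n$, and automatic positivity of $\lambda_1$. The difference is in the optimal-scaling step.

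The paper notes that $f(\alpha)=\max(|1-\alpha\lambda_1|,|1-\alpha\lambda_n|)$ is convex piecewise linear. It places the minimum where the branches $1-\alpha\lambda_1$ and $\alpha\lambda_n-1$ cross. It leaves implicit that these are the active branches to the left and right of $\alpha^\star$; your case analysis supplies exactly that check.

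Your preferred argument is instead a dual certificate. You average the minorants $f(\alpha)\ge 1-\alpha\lambda_1$ and $f(\alpha)\ge\alpha\lambda_n-1$ with weights $\lambda_n/(\lambda_1+\lambda_n)$ and $\lambda_1/(\lambda_1+\lambda_n)$. These are precisely the weights that make $0$ a convex combination of the slopes $-\lambda_1,\lambda_n$, i.e.\ $0\in\partial f(\alpha^\star)$. The average cancels $\alpha$ and gives a constant lower bound, which is attained at $\alpha^\star$.

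What your route buys is a one-line proof of optimality for all $\alpha>0$ at once, with no need to identify branches. The paper's version is quicker to state but leaves the reader to see why the crossing point is the minimiser.

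In the write-up, delete the aborted $\tfrac12$-average fragment before ``more precisely'' and state the weighted bound directly.
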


\begin{proof}
$B_w$ is symmetric positive definite, so its eigenvalues are positive
and $M_w=I-B_w$ is symmetric with eigenvalues $\{1-\lambda_i\}$.
Since $M_w$ is symmetric,
$\sprad(M_w)=\max_{1\le i\le n}|1-\lambda_i|$.
Each $\lambda_i\in[\lambda_1,\lambda_n]$ is a convex combination of
the endpoints, so convexity of $\lambda\mapsto|1-\lambda|$ gives
$|1-\lambda_i|\le\max(|1-\lambda_1|,|1-\lambda_n|)$; the reverse
inequality is trivial since $\lambda_1$ and $\lambda_n$ are among the
$\lambda_i$.
This proves~\eqref{eq:exact-rho}.
Convergence is equivalent to $|1-\lambda_i|<1$ for every $i$, i.e.\
$\lambda_n<2$ (the lower bound $\lambda_1>0$ is automatic).

For~\eqref{eq:optimal-scaling}, $B_{\alpha w}=\alpha B_w$ has
eigenvalues $\alpha\lambda_i$, so
$\sprad(M_{\alpha w})=\max(|1-\alpha\lambda_1|,|1-\alpha\lambda_n|)$,
a convex piecewise-linear function of $\alpha>0$.
Its two branches cross where
$1-\alpha\lambda_1=\alpha\lambda_n-1$, giving
$\alpha^{\star}=2/(\lambda_1+\lambda_n)$ and
$\sprad(M_{\alpha^{\star}w})
=1-\alpha^{\star}\lambda_1
=(\lambda_n-\lambda_1)/(\lambda_n+\lambda_1)
=(\kappa-1)/(\kappa+1)$.
\end{proof}

\begin{corollary}[Tight-frame one-step convergence]\label{cor:tight_frame}
If there exist $w_i^{\star}>0$ with
$\sum_{i=1}^{n} w_i^{\star} P_i = I$, then
$M_{w^{\star}}=0$ and $\bx^{(1)}=\bxi$ for every $\bx^{(0)}\in\R^n$.
At $n=2$ this reduces to Corollary~\ref{cor:one_step}
($\theta=\pi/2$).
\end{corollary}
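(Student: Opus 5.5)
The plan is a direct substitution into the error recursion~\eqref{eq:error_recursion}, followed by the $n=2$ specialisation. First, by definition $M_{w^\star}=I-\sum_{i=1}^n w_i^\star P_i$. The hypothesis $\sum_i w_i^\star P_i=I$ then gives $M_{w^\star}=0$ immediately. Since $\e^{(1)}=M_{w^\star}\e^{(0)}$, we get $\e^{(1)}=0$, that is, $\bx^{(1)}=\bxi$, and this holds for every $\bx^{(0)}\in\R^n$. Equivalently, in the language of Theorem~\ref{thm:general-n}, every eigenvalue of $B_{w^\star}$ equals $1$, so $\lambda_1=\lambda_n=1$ and $\sprad(M_{w^\star})=0$ by~\eqref{eq:exact-rho}. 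Nothing beyond the recursion is needed for this part.

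Next I would handle the reduction to Corollary~\ref{cor:one_step}, in both directions, for $n=2$.

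\emph{From $\theta=\pi/2$ to the hypothesis.} If $\theta=\pi/2$, then $\hat{\ba}_1,\hat{\ba}_2$ form an orthonormal basis. Hence $P_1+P_2=I$, and the hypothesis holds with $w_1^\star=w_2^\star=1$. The conclusion is then exactly Corollary~\ref{cor:one_step}.

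\emph{From the hypothesis to $\theta=\pi/2$.} Suppose some $w_1^\star,w_2^\star>0$ satisfy $w_1^\star P_1+w_2^\star P_2=I$. Then $\sprad(M_{w^\star})=0$. Theorem~\ref{thm:optimal} gives $0\ge|\cos\theta|$, forcing $\cos\theta=0$, and its uniqueness clause forces $w_1^\star=w_2^\star=1$.

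As an independent check of this direction, take traces: $w_1^\star+w_2^\star=2$. Using the determinant identity~\eqref{eq:det}, $w_1^\star w_2^\star\sin^2\theta=1$. Since $w_1^\star w_2^\star\le 1$ by AM--GM when the sum is $2$, this forces $\sin^2\theta=1$ and $w_1^\star=w_2^\star=1$.

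The only step requiring any care is this reverse implication. I expect the trace/determinant argument to dispatch it cleanly. Everything else is a one-line substitution.
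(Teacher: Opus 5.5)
Your proof is correct, and for the main claim it is the paper's argument, which consists solely of the substitution $M_{w^{\star}}=I-\sum_i w_i^{\star}P_i=I-I=0$. The paper gives no argument for the $n=2$ reduction, so your two-directional check is a correct addition rather than a departure: unit weights when $\theta=\pi/2$, and conversely $\cos\theta=0$ with $w_1^{\star}=w_2^{\star}=1$, via Theorem~\ref{thm:optimal} or via the trace/determinant identity with AM--GM.
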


\begin{proof}
Immediate from
$M_{w^{\star}}=I-\sum_i w_i^{\star}P_i=I-I=0$.
\end{proof}

\begin{remark}\label{rem:subsumes}
Theorem~\ref{thm:general-n} subsumes Theorem~\ref{thm:rate}: the
eigenvalues $\lambda_\pm=\mu\pm\Delta/2$ from
\eqref{eq:trace}--\eqref{eq:det} give
$\max(|1-\lambda_-|,|1-\lambda_+|)=|1-\mu|+\Delta/2$,
recovering~\eqref{eq:rho_formula}.
\end{remark}

\section{Numerical Illustration}\label{sec:numerics}

We illustrate Theorems~\ref{thm:rate}--\ref{thm:optimal} on two
concrete examples.

\medskip
\noindent\textbf{Example~1 (Suboptimal vs.\ optimal weights).}
Let
\[
  A = \begin{pmatrix}2&1\\1&2\end{pmatrix},\quad
  \bb = \begin{pmatrix}3\\3\end{pmatrix},\quad
  \bxi = \begin{pmatrix}1\\1\end{pmatrix}.
\]
Here $\norm{\ba_1}=\norm{\ba_2}=\sqrt{5}$ and
$\cos\theta=\ip{\ba_1}{\ba_2}/(\norm{\ba_1}\norm{\ba_2})=4/5$,
giving $\varrho^*=4/5$.
Direct computation confirms $A^\top DA=\tfrac{1}{5}
\bigl(\begin{smallmatrix}5&4\\4&5\end{smallmatrix}\bigr)$
with eigenvalues $9/5$ and $1/5$, so $\sprad(M)=4/5$.

\medskip
\noindent\textbf{Example~2 (Single-step convergence).}
Let $A=\bigl(\begin{smallmatrix}1&1\\1&-1\end{smallmatrix}\bigr)$,
$\bb=\bigl(\begin{smallmatrix}2\\0\end{smallmatrix}\bigr)$,
$\bxi=(1,1)^\top$.
Since $\ip{\ba_1}{\ba_2}=1-1=0$, Corollary~\ref{cor:one_step}
guarantees $\bx^{(1)}=\bxi$ for any $\bx^{(0)}$.
Indeed $A^\top DA=I$, hence $M=0$,
and starting from $\bx^{(0)}=(3,-1)^\top$ gives
$\bx^{(1)}=(1,1)^\top$ exactly.

\section{Conclusions}

We have established three results that sharpen the analysis
of Cimmino's algorithm in~\cite{Guida2023}:
\begin{enumerate}[(i)]
  \item \emph{Exact rate:}
        $\sprad(M_w)$ is the precise per-step contraction factor
        (Theorem~\ref{thm:convergence}, Figure~\ref{fig:envelopes}).
  \item \emph{Closed-form formula for $n=2$:}
        $\varrho=|1-\mu|+\tfrac{1}{2}\sqrt{(w_1-w_2)^2+4w_1w_2\cos^2\theta}$
        shows that the inter-normal angle $\theta$ is the sole
        geometric quantity governing convergence
        (Theorem~\ref{thm:rate}, Figures~\ref{fig:reflection}
        and~\ref{fig:rho_theta}).
  \item \emph{Global optimality of unit weights:}
        $w_1=w_2=1$ uniquely minimises $\sprad(M_w)$,
        attaining $\varrho^*=|\cos\theta|$
        (Theorem~\ref{thm:optimal}, Figure~\ref{fig:rho_theta}).
\end{enumerate}
At $\theta=\pi/2$ the algorithm converges in a single step
(Corollary~\ref{cor:one_step}); the tight-frame condition of
Corollary~\ref{cor:tight_frame} extends this one-step regime to
general~$n$.
The inter-normal angle $\theta$ emerges as a cheap preprocessing
diagnostic: if $|\cos\theta|$ is close to~$1$, Cimmino's method will
converge slowly regardless of weight choice, and a different solver
should be considered.

\section*{Acknowledgements}
The author thank the Tungal School of Basic and Applied Sciences, Jamkhandi, for infrastructure support.

\section*{Competing Interests}
The author declares that there are no competing interests.

\section*{Funding}
No specific funding was received for this work.

\bibliographystyle{elsarticle-num}

\end{document}